\newtheorem{theorem}{Theorem}[section]
\newtheorem{lemma}[theorem]{Lemma}
\newtheorem{corollary}[theorem]{Corollary}
\theoremstyle{definition}
\newtheorem{definition}[theorem]{Definition}
\theoremstyle{remark}
\newtheorem{remark}[theorem]{Remark}
\begin{document}

\title{Combined Delta-Nabla Sum Operator\\
in Discrete Fractional Calculus}


\author{Nuno R. O. Bastos\\
       \texttt{nbastos@mat.estv.ipv.pt}\\[4mm]
       {\sl Department of Mathematics, ESTGV}\\
       {\sl Polytechnic Institute of Viseu}\\
       {\sl 3504-510 Viseu, Portugal}
       \and
       Delfim F. M. Torres\footnote{Corresponding author.}\\
       \texttt{delfim@ua.pt}\\[4mm]
       {\sl Department of Mathematics}\\
       {\sl University of Aveiro}\\
       {\sl 3810-193 Aveiro, Portugal}}

\date{}

\maketitle


\begin{abstract}
We introduce a more general discrete fractional operator,
given by convex linear combination of the delta and nabla fractional sums.
Fundamental properties of the new fractional operator are proved. As particular
cases, results on delta and nabla discrete fractional calculus are obtained.
\end{abstract}

\begin{keyword}
Discrete fractional calculus; Delta and nabla operators; Convex linear combination.

\medskip

\textbf{MSC 2010:} 39A12; 26A33.

\end{keyword}


\setcounter{page}{1}


\section{Introduction}

The main goal of this note is to introduce a new and more general
fractional sum operator that unify and extend the discrete
fractional operators used in fractional calculus.
Looking to the literature of discrete fractional difference
operators, two approaches are found (see, \textrm{e.g.}, \cite{4,comNuno:Rui:Z}):
one using the $\Delta$ point of view (sometimes called the forward fractional difference approach),
another using the $\nabla$ perspective (sometimes called the backward fractional
difference approach). Here we introduce a new operator, making use of the symbol
${_{\gamma}}\diamondsuit$ (\textrm{cf.} Definition~\ref{diamond}).
When $\gamma = 1$ the ${_{\gamma}}\diamondsuit$ operator is reduced to the $\Delta$ one;
when $\gamma = 0$ the ${_{\gamma}}\diamondsuit$ operator
coincides with the corresponding $\nabla$ fractional sum.

The work is organized as follows. In Section~\ref{sec:2}
we review the basic definitions of the discrete fractional calculus.
Our results are then given in Section~\ref{sec:3}:
we introduce the fractional diamond sum
(Definition~\ref{diamond}) and prove its main properties.
We end with Section~\ref{sec:4} of conclusions and future perspectives.


\section{Preliminaries}
\label{sec:2}

Here we only give a very short introduction to the basic definitions
in discrete fractional calculus. For more on the subject we refer
the reader to \cite{2,3,7}.

We begin by introducing some notation used throughout.
Let $a$ be an arbitrary real number and $b = a + k$
for a certain $k \in\mathbb{N}$ with $k \ge 2$.
Let $\mathbb{T}= \{a, a + 1, \ldots, b\}$.
According with \cite{8}, we define the
factorial function
$$
t^{(n)} = t(t-1)(t-2)\ldots(t-n+1),
\quad n\in\mathbb{N}.
$$
Also in agreement with the same authors \cite{9},
we define
$$
t^{\overline{n}}=t(t+1)(t+2)\ldots(t+n-1),
\quad n\in\mathbb{N},
$$
and $t^{\overline{0}}=1$.
Extending the two above definitions from an integer $n$ to an arbitrary
real number $\alpha$, we have
\begin{equation*}
t^{(\alpha)}=\frac{\Gamma(t+1)}{\Gamma(t+1-\alpha)}
\text{ \ and \ }
t^{\overline{\alpha}}=\frac{\Gamma(t+\alpha)}{\Gamma(t)},
\end{equation*}
where $\Gamma$ is the Euler gamma function.
Throughout the text we shall use the standard notations
$\sigma(s)=s+1$ and $\rho(s)=s-1$
of the time scale calculus in $\mathbb{Z}$ \cite{8,9}.

\begin{definition}[\cite{12}]
\label{delta}
The discrete delta fractional sum operator is defined by
\begin{equation*}
(\Delta_a^{-\alpha}f)(t)
=\frac{1}{\Gamma(\alpha)}\sum_{s=a}^{t-\alpha}(t-\sigma(s))^{(\alpha-1)}f(s),
\end{equation*}
where $\alpha>0$. Here $f$ is defined for $s=a \mod (1)$ and
$\Delta_a^{-\alpha}f$ is defined for $t=(a+\alpha) \mod (1)$.
\end{definition}

\begin{remark}
Given a real number $a$ and $b = a + k$, $k \in \mathbb{N}$,
$\sum_{s=a}^{b} g(s) = g(a) + g(a+1) + \cdots + g(b)$.
\end{remark}

\begin{remark}
Let $\mathbb{N}_t=\{t,t+1,t+2,\ldots\}$.
We note that $\Delta_a^{-\alpha}$ maps functions defined on
$\mathbb{N}_a$ to functions defined on $\mathbb{N}_{a+\alpha}$.
\end{remark}

Analogously to Definition~\ref{delta},
one considers the discrete nabla fractional sum operator:

\begin{definition}[\cite{4}]
\label{nabla}
The discrete nabla fractional sum operator is defined by
\begin{equation*}
(\nabla_a^{-\beta}f)(t)
=\frac{1}{\Gamma(\beta)}\sum_{s=a}^{t}(t-\rho(s))^{\overline{\beta-1}}f(s),
\end{equation*}
where $\beta>0$. Here $f$ is defined for $s=a \mod (1)$ and
$\nabla_a^{-\beta}f$ is defined for $t=a \mod (1)$.
\end{definition}

\begin{remark}
Let $\mathbb{N}_a=\{a,a+1,a+2,\ldots\}$.
The operator $\nabla_a^{-\beta}$ maps functions defined
on $\mathbb{N}_a$ to functions defined on $\mathbb{N}_{a}$.
The fact that $f$ and $\nabla_a^{-\beta} f$ have the same domain,
while $f$ and $\Delta_a^{-\alpha} f$ do not, explains why some
authors prefer the nabla approach.
\end{remark}

The next result gives a relation between the delta fractional sum
and the nabla fractional sum operators.

\begin{lemma}[\cite{4}]
\label{nabladelta}
Let $0\leq m-1<\nu\leq m$, where $m$ denotes an
integer. Let $a$ be a positive integer, and $y(t)$ be defined on
$t \in \mathbb{N}_a=\{a,a+1,a+2,\ldots\}$. The following statement holds:
$\left(\Delta_a^{-\nu} y\right)(t+\nu)=\left(\nabla_a^{-\nu} y\right)(t)$,
$t\in \mathbb{N}_{a}$.
\end{lemma}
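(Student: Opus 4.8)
The plan is to prove the identity $\left(\Delta_a^{-\nu} y\right)(t+\nu)=\left(\nabla_a^{-\nu} y\right)(t)$ by writing out both sides from their definitions and showing the two sums coincide term by term. First I would expand the left-hand side: by Definition~\ref{delta} with $\alpha = \nu$,
\begin{equation*}
\left(\Delta_a^{-\nu} y\right)(t+\nu)
= \frac{1}{\Gamma(\nu)} \sum_{s=a}^{t} (t+\nu-\sigma(s))^{(\nu-1)} y(s),
\end{equation*}
where I have substituted $t \mapsto t+\nu$ so that the upper summation limit $t+\nu-\nu$ becomes exactly $t$, matching the upper limit in Definition~\ref{nabla}. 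The right-hand side is, by definition, $\frac{1}{\Gamma(\nu)}\sum_{s=a}^{t}(t-\rho(s))^{\overline{\nu-1}} y(s)$. Since the prefactors and the summation ranges already agree, it suffices to check that the kernels match, i.e.\ that $(t+\nu-\sigma(s))^{(\nu-1)} = (t-\rho(s))^{\overline{\nu-1}}$ for each admissible $s$.

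The key step is therefore a pointwise identity between the falling factorial power and the rising factorial power. Using $\sigma(s)=s+1$ and $\rho(s)=s-1$, the left kernel is $(t+\nu-s-1)^{(\nu-1)}$ and the right kernel is $(t-s+1)^{\overline{\nu-1}}$. I would convert both to the gamma-function form given in the preliminaries: $x^{(\alpha)} = \Gamma(x+1)/\Gamma(x+1-\alpha)$ and $x^{\overline{\alpha}} = \Gamma(x+\alpha)/\Gamma(x)$. With $\alpha = \nu-1$, the left kernel becomes $\Gamma(t+\nu-s)/\Gamma(t-s+2-\nu+\nu-1)$... more carefully, $\Gamma((t+\nu-s-1)+1)/\Gamma((t+\nu-s-1)+1-(\nu-1)) = \Gamma(t+\nu-s)/\Gamma(t-s+1)$, and the right kernel becomes $\Gamma((t-s+1)+(\nu-1))/\Gamma(t-s+1) = \Gamma(t+\nu-s)/\Gamma(t-s+1)$. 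These are manifestly equal, which closes the argument.

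I do not expect a serious obstacle here; the proof is essentially bookkeeping with the index shift $t \mapsto t+\nu$ and the translation between the two factorial conventions via $\Gamma$. The one point deserving care is the alignment of the summation limits and the domain hypotheses: I should confirm that for $t \in \mathbb{N}_a$ the expression $\left(\Delta_a^{-\nu}y\right)$ is being evaluated at a point $t+\nu$ of the form $(a+\nu) \bmod 1$, so that Definition~\ref{delta} applies and its upper limit $(t+\nu)-\nu = t$ is an integer in $\mathbb{N}_a$; the condition $0 \le m-1 < \nu \le m$ and $a$ a positive integer guarantee the gamma functions appearing are evaluated away from their poles for the relevant arguments, and the restriction on $\nu$ ensures the fractional sums are well defined. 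Once the limits and the kernel identity are in place, equality of the two sums is immediate.
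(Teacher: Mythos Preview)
The paper does not supply its own proof of this lemma; it is quoted from \cite{4} without argument. Your direct verification is correct and is the natural proof: after the shift $t\mapsto t+\nu$ the two sums run over the same index set $s=a,\ldots,t$ with the same prefactor $1/\Gamma(\nu)$, and the kernel identity
\[
(t+\nu-\sigma(s))^{(\nu-1)}=\frac{\Gamma(t+\nu-s)}{\Gamma(t-s+1)}=(t-\rho(s))^{\overline{\nu-1}}
\]
follows immediately from the gamma-function formulas for $x^{(\alpha)}$ and $x^{\overline{\alpha}}$, exactly as you compute. Your domain check is also in order: for $t\in\mathbb{N}_a$ one has $t+\nu\equiv a+\nu\pmod 1$, so Definition~\ref{delta} applies, and for each $s\in\{a,\ldots,t\}$ the arguments $t-s+1$ and $t+\nu-s$ are positive, so no gamma poles arise. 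The hypothesis $0\le m-1<\nu\le m$ plays no role in the identity itself; it is inherited from the context in \cite{4}, where the lemma is used alongside fractional differences.
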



\section{Main Results}
\label{sec:3}

We introduce a general discrete diamond-gamma fractional sum operator
by using a convex combination of the delta
and nabla fractional sum operators.

\begin{definition}
\label{diamond}
The diamond-$\gamma$ fractional operator
of order $(\alpha,\beta)$ is given,
when applied to a function $f$ at point $t$, by
\begin{equation*}
\left(_{\gamma}\diamondsuit_a^{-\alpha,-\beta}f\right)(t)
=\gamma \left(\Delta_a^{-\alpha}f\right)(t+\alpha)
+(1-\gamma) \left(\nabla_a^{-\beta}f\right)(t),
\end{equation*}
where $\alpha>0$, $\beta>0$, and $\gamma\in[0,1]$.
Here, both $f$ and $_{\gamma}\diamondsuit_a^{-\alpha,-\beta}f$
are defined for $t=a \mod (1)$.
\end{definition}

\begin{remark}
Similarly to the nabla fractional operator,
our operator $_{\gamma}\diamondsuit_a^{-\alpha,-\beta}$
maps functions defined on $\mathbb{N}_a$ to functions defined on $\mathbb{N}_a$,
$\mathbb{N}_a=\{a,a+1,a+2,\ldots\}$ for $a$ a given real number.
\end{remark}

\begin{remark}
The new diamond fractional operator of Definition~\ref{diamond} gives,
as particular cases, the operator of Definition~\ref{delta}
for $\gamma=1$,
$$
\left(_{1}\diamondsuit_a^{-\alpha,-\beta}f\right)(t)
=\left(\Delta_a^{-\alpha}f\right)(t+\alpha),
\quad t\equiv a \mod (1),
$$
and the operator of Definition~\ref{nabla} for $\gamma=0$,
$$
\left(_{0}\diamondsuit_a^{-\alpha,-\beta}f\right)(t)
=\left(\nabla_a^{-\beta}f\right)(t),
\quad t\equiv a \mod (1).
$$
\end{remark}

The next theorems give important properties of the
new, more general, discrete fractional operator
$_{\gamma}\diamondsuit_a^{-\alpha,-\beta}$.

\begin{theorem}
\label{thm:SUM}
Let $f$ and $g$ be real functions defined on $\mathbb{N}_a$,
$\mathbb{N}_a=\{a,a+1,a+2,\ldots\}$ for $a$ a given real number.
The following equality holds:
\begin{equation*}
\left(_{\gamma}\diamondsuit_a^{-\alpha,-\beta} (f+g)\right)(t)
=\left({_{\gamma}\diamondsuit}_a^{-\alpha,-\beta}f\right)(t)
+\left({_{\gamma}\diamondsuit}_a^{-\alpha,-\beta}g\right)(t).
\end{equation*}
\end{theorem}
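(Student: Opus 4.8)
The plan is to reduce the claim to the additivity of the delta and nabla fractional sums separately, and to establish each of those by appealing to the linearity of finite sums. First I would write out the definition of $\left({_{\gamma}\diamondsuit}_a^{-\alpha,-\beta} (f+g)\right)(t)$ from Definition~\ref{diamond}, obtaining $\gamma \left(\Delta_a^{-\alpha}(f+g)\right)(t+\alpha) + (1-\gamma)\left(\nabla_a^{-\beta}(f+g)\right)(t)$. The task then splits into showing $\left(\Delta_a^{-\alpha}(f+g)\right)(t+\alpha) = \left(\Delta_a^{-\alpha}f\right)(t+\alpha) + \left(\Delta_a^{-\alpha}g\right)(t+\alpha)$ and the analogous identity for $\nabla_a^{-\beta}$.

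For the delta part, I would expand $\left(\Delta_a^{-\alpha}(f+g)\right)(t+\alpha)$ using Definition~\ref{delta}, getting $\frac{1}{\Gamma(\alpha)}\sum_{s=a}^{t}(t+\alpha-\sigma(s))^{(\alpha-1)}(f(s)+g(s))$. Since the kernel $(t+\alpha-\sigma(s))^{(\alpha-1)}$ does not depend on $f$ or $g$, I would distribute it over the sum $f(s)+g(s)$ and split the finite sum into two, recovering $\left(\Delta_a^{-\alpha}f\right)(t+\alpha)+\left(\Delta_a^{-\alpha}g\right)(t+\alpha)$. The same computation works verbatim for the nabla part, using Definition~\ref{nabla} with kernel $(t-\rho(s))^{\overline{\beta-1}}$. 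Finally I would substitute both identities back into the expression for $\left({_{\gamma}\diamondsuit}_a^{-\alpha,-\beta}(f+g)\right)(t)$, regroup the $\gamma$ and $(1-\gamma)$ terms, and recognize the result as $\left({_{\gamma}\diamondsuit}_a^{-\alpha,-\beta}f\right)(t)+\left({_{\gamma}\diamondsuit}_a^{-\alpha,-\beta}g\right)(t)$.

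There is essentially no obstacle here: the statement is a straightforward consequence of the fact that both constituent operators are built from finite sums against a fixed kernel, and finite summation is a linear operation. The only point requiring a little care is making sure the upper limits of summation ($t-\alpha$ in the delta sum after the shift $t \mapsto t+\alpha$, and $t$ in the nabla sum) and the domains of definition are handled consistently, but since the same limits appear for $f$, $g$, and $f+g$, this is immediate. I would keep the proof to a few lines, possibly even omitting the explicit nabla computation by noting it is entirely analogous to the delta case.
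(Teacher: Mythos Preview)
Your proposal is correct and follows essentially the same approach as the paper: expand the diamond operator via Definition~\ref{diamond}, write out the delta and nabla sums explicitly, split each finite sum by linearity, and regroup into the two diamond operators. The only cosmetic difference is that the paper carries out the entire computation in one chain of equalities rather than isolating the delta and nabla additivity as separate sub-claims.
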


\begin{proof}
The intended equality follows from the definition
of diamond-$\gamma$ fractional sum of order $(\alpha,\beta)$:
\begin{equation*}
\begin{split}
(_{\gamma}\diamondsuit_a^{-\alpha,-\beta}(f+g))(t)
&=\gamma(\Delta_a^{-\alpha}(f+g))(t+\alpha)
+(1-\gamma)(\nabla_a^{-\beta}(f+g))(t)\\
&=\frac{\gamma}{\Gamma(\alpha)}\sum_{s=a}^{t}(t+\alpha-\sigma(s))^{(\alpha-1)}(f(s)
+g(s))+\frac{1-\gamma}{\Gamma(\beta)}\sum_{s=a}^{t}(t-\rho(s))^{\overline{\beta-1}}(f(s)+g(s))\\
&=\frac{\gamma}{\Gamma(\alpha)}\sum_{s=a}^{t}(t+\alpha-\sigma(s))^{(\alpha-1)}f(s)
+\frac{\gamma}{\Gamma(\alpha)}\sum_{s=a}^{t}(t+\alpha-\sigma(s))^{(\alpha-1)}g(s)\\
&\qquad +\frac{1-\gamma}{\Gamma(\beta)}\sum_{s=a}^{t}(t-\rho(s))^{\overline{\beta-1}}f(s)
+\frac{1-\gamma}{\Gamma(\beta)}\sum_{s=a}^{t}(t-\rho(s))^{\overline{\beta-1}}g(s)\\
&= \left[\frac{\gamma}{\Gamma(\alpha)}\sum_{s=a}^{t}(t+\alpha-\sigma(s))^{(\alpha-1)}f(s)
+\frac{1-\gamma}{\Gamma(\beta)}\sum_{s=a}^{t}(t-\rho(s))^{\overline{\beta-1}}f(s)\right]\\
&\qquad + \left[\frac{\gamma}{\Gamma(\alpha)}\sum_{s=a}^{t}(t+\alpha-\sigma(s))^{(\alpha-1)}g(s)
+\frac{1-\gamma}{\Gamma(\beta)}\sum_{s=a}^{t}(t-\rho(s))^{\overline{\beta-1}}g(s)\right]\\
&=(_{\gamma}\diamondsuit_a^{-\alpha,-\beta}f)(t) + (_{\gamma}\diamondsuit_a^{-\alpha,-\beta}g)(t).
\end{split}
\end{equation*}
\end{proof}

\begin{theorem}
\label{thm:const}
Let $f(t)=k$ on $\mathbb{N}_a$, $k$ a constant.
The following equality holds:
$$
(_{\gamma}\diamondsuit_a^{-\alpha,-\beta}f)(t)
=\gamma\frac{\Gamma(t-a+1+\alpha) k}{\Gamma(\alpha+1)\Gamma(t-a+1)}
+(1-\gamma)\frac{\Gamma(t-a+1+\beta) k}{\Gamma(\beta+1)\Gamma(t-a+1)}\, .
$$
\end{theorem}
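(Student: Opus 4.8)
The plan is to evaluate the diamond-$\gamma$ operator on the constant function $f\equiv k$ by computing the delta part and the nabla part separately and then recombining via the convex combination. First I would handle the nabla piece: by Definition~\ref{nabla}, $(\nabla_a^{-\beta}k)(t)=\frac{k}{\Gamma(\beta)}\sum_{s=a}^{t}(t-\rho(s))^{\overline{\beta-1}}$, and since $t-\rho(s)=t-s+1$, the summand is $(t-s+1)^{\overline{\beta-1}}=\Gamma(t-s+\beta)/\Gamma(t-s+1)$. Reindexing with $j=t-s$ (so $j$ runs from $0$ to $t-a$, which is an integer because $t\equiv a\bmod 1$), the sum becomes $\sum_{j=0}^{t-a}(j+1)^{\overline{\beta-1}}$, a finite sum of rising factorials. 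The key identity here is the discrete antidifference/hockey-stick type formula $\sum_{j=0}^{N}(j+1)^{\overline{\beta-1}}=\frac{(N+1)^{\overline{\beta}}}{\beta}$, equivalently $\sum_{j=0}^{N}\frac{\Gamma(j+\beta)}{\Gamma(j+1)}=\frac{1}{\beta}\cdot\frac{\Gamma(N+1+\beta)}{\Gamma(N+1)}$, which follows by telescoping since $\frac{(j+1)^{\overline{\beta}}}{\beta}-\frac{j^{\overline{\beta}}}{\beta}=(j+1)^{\overline{\beta-1}}$. With $N=t-a$ this yields $(\nabla_a^{-\beta}k)(t)=\frac{k}{\Gamma(\beta+1)}\cdot\frac{\Gamma(t-a+1+\beta)}{\Gamma(t-a+1)}$, matching the second term.

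For the delta piece I would either repeat the analogous computation directly on $(\Delta_a^{-\alpha}k)(t+\alpha)$ using Definition~\ref{delta} with the falling factorial $(t+\alpha-\sigma(s))^{(\alpha-1)}$, or — more efficiently — invoke Lemma~\ref{nabladelta}, which gives $(\Delta_a^{-\nu}y)(t+\nu)=(\nabla_a^{-\nu}y)(t)$. Applying this with $\nu=\alpha$ and $y\equiv k$ immediately reduces the delta term to $(\nabla_a^{-\alpha}k)(t)$, which by the computation above equals $\frac{k}{\Gamma(\alpha+1)}\cdot\frac{\Gamma(t-a+1+\alpha)}{\Gamma(t-a+1)}$. (One should note that Lemma~\ref{nabladelta} is stated for $a$ a positive integer; if strict adherence to general real $a$ is wanted, the direct computation is safer, but the falling-factorial sum $\sum_{s=a}^{t+\alpha-\alpha}(t+\alpha-s-1)^{(\alpha-1)}$ reindexes in exactly the same way after writing $(t+\alpha-\sigma(s))^{(\alpha-1)}=\Gamma(t+\alpha-s)/\Gamma(t-s+1)$ and telescoping with the falling-factorial antidifference.)

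Finally I would substitute both evaluated pieces into Definition~\ref{diamond}:
\begin{equation*}
(_{\gamma}\diamondsuit_a^{-\alpha,-\beta}f)(t)
=\gamma\,(\Delta_a^{-\alpha}k)(t+\alpha)+(1-\gamma)\,(\nabla_a^{-\beta}k)(t),
\end{equation*}
and plug in the two closed forms to obtain exactly the claimed expression. The main obstacle — such as it is — is establishing the finite-sum closed form for $\sum_{j=0}^{N}\frac{\Gamma(j+\beta)}{\Gamma(j+1)}$; this is a telescoping computation once one recognizes $(j+1)^{\overline{\beta}}/\beta$ (resp.\ the falling-factorial analogue) as the discrete antiderivative of the summand, so no genuine difficulty arises, only bookkeeping with the shifts $\sigma$, $\rho$ and the reindexing $j=t-s$. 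A minor point to check is that the bounds of summation are consistent: in the delta sum the upper limit is $(t+\alpha)-\alpha=t$, so both sums run $s=a,\dots,t$, which is what makes the two reindexed sums identical in structure and lets the same lemma do both jobs.
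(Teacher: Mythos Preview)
Your proposal is correct and follows essentially the same route as the paper: apply Definition~\ref{diamond}, evaluate the delta and nabla sums on the constant function, and combine. The paper's proof is terser---it simply asserts the closed forms $\sum_{s=a}^{t}(t+\alpha-\sigma(s))^{(\alpha-1)}=\frac{\Gamma(t-a+1+\alpha)}{\alpha\Gamma(t-a+1)}$ and the nabla analogue without justification---whereas you supply the telescoping argument explicitly and also note the optional shortcut via Lemma~\ref{nabladelta}; these are elaborations rather than a different approach.
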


\begin{proof}
By definition of diamond-$\gamma$ fractional
sum of order $(\alpha,\beta)$, we have
\[\begin{split}
(_{\gamma}&\diamondsuit_a^{-\alpha,-\beta}k)(t)
=\gamma(\Delta_a^{-\alpha}k)(t+\alpha)+(1-\gamma)(\nabla_a^{-\beta}k)(t)
=\frac{\gamma}{\Gamma(\alpha)}\sum_{s=0}^t
k(t+\alpha-\sigma(s))^{(\alpha-1)}
+\frac{1-\gamma}{\Gamma(\beta)}\sum_{s=0}^t
k(t-\rho(s))^{\overline{\beta-1}}\\
&=\gamma\frac{\Gamma(t-a+1+\alpha)}{\alpha\Gamma(\alpha)\Gamma(t-a+1)}k+
(1-\gamma)\frac{\Gamma(t-a+1+\beta)}{\beta\Gamma(\beta)\Gamma(t-a+1)}k
=\gamma\frac{\Gamma(t-a+1+\alpha)}{\Gamma(\alpha+1)\Gamma(t-a+1)}k+
(1-\gamma)\frac{\Gamma(t-a+1+\beta)}{\Gamma(\beta+1)\Gamma(t-a+1)}k.
\end{split}
\]
\end{proof}

\begin{corollary}
\label{m:f:cor}
Let $f(t) \equiv k$
for a certain constant $k$. Then,
\begin{equation}
\label{Miller_Constant}
(\Delta_a^{-\alpha}f)(t+\alpha)
=\frac{\Gamma(t-a+1+\alpha)}{\Gamma(\alpha+1)\Gamma(t-a+1)}k.
\end{equation}
\end{corollary}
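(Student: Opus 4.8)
The plan is to obtain Corollary~\ref{m:f:cor} directly from Theorem~\ref{thm:const} by specializing to $\gamma = 1$. Indeed, when $\gamma = 1$ the second summand in the conclusion of Theorem~\ref{thm:const} vanishes, and by the remark following Definition~\ref{diamond} we have $\left({_{1}\diamondsuit}_a^{-\alpha,-\beta}f\right)(t) = \left(\Delta_a^{-\alpha}f\right)(t+\alpha)$. Combining these two facts immediately yields \eqref{Miller_Constant}.

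Concretely, I would start from Theorem~\ref{thm:const} applied to the constant function $f(t) \equiv k$, set $\gamma = 1$, so that the term multiplied by $(1-\gamma)$ drops out, leaving
\[
\left({_{1}\diamondsuit}_a^{-\alpha,-\beta}f\right)(t)
= \frac{\Gamma(t-a+1+\alpha)\,k}{\Gamma(\alpha+1)\,\Gamma(t-a+1)}.
\]
Then I would rewrite the left-hand side using $\left({_{1}\diamondsuit}_a^{-\alpha,-\beta}f\right)(t) = \left(\Delta_a^{-\alpha}f\right)(t+\alpha)$, which is exactly the displayed identity of the $\gamma = 1$ remark. This gives the claimed formula \eqref{Miller_Constant}.

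There is essentially no obstacle here: the corollary is a pure specialization, and the only thing to be careful about is matching the shift in the argument ($t+\alpha$ on the delta side) correctly, which is already built into Definition~\ref{diamond}. One could alternatively prove it from scratch by evaluating the delta fractional sum $\frac{1}{\Gamma(\alpha)}\sum_{s=a}^{t}(t+\alpha-\sigma(s))^{(\alpha-1)}k$ via the telescoping/summation identity for falling factorials, but that computation is precisely the $\gamma = 1$ part of the proof of Theorem~\ref{thm:const}, so invoking that theorem is the cleanest route.

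\begin{proof}
This is the particular case $\gamma = 1$ of Theorem~\ref{thm:const}. Taking $\gamma = 1$ in Theorem~\ref{thm:const}, the term carrying the factor $(1-\gamma)$ vanishes and we obtain
\[
\left({_{1}\diamondsuit}_a^{-\alpha,-\beta}f\right)(t)
= \frac{\Gamma(t-a+1+\alpha)}{\Gamma(\alpha+1)\,\Gamma(t-a+1)}\,k .
\]
On the other hand, by the remark following Definition~\ref{diamond} (the case $\gamma = 1$),
\[
\left({_{1}\diamondsuit}_a^{-\alpha,-\beta}f\right)(t)
= \left(\Delta_a^{-\alpha}f\right)(t+\alpha).
\]
Comparing the two displays gives \eqref{Miller_Constant}.
\end{proof}
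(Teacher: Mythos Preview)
Your proof is correct and follows exactly the same approach as the paper: specialize Theorem~\ref{thm:const} to $\gamma=1$ and use the identification $\left({_{1}\diamondsuit}_a^{-\alpha,-\beta}f\right)(t)=\left(\Delta_a^{-\alpha}f\right)(t+\alpha)$ from the remark after Definition~\ref{diamond}.
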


\begin{proof}
The result follows from Theorem~\ref{thm:const}
choosing $\gamma=1$ and recalling that
$(_{1}\diamondsuit_a^{-\alpha,-\beta}k)(t)
= (\Delta_a^{-\alpha}k)(t+\alpha)$.
\end{proof}

\begin{remark}
In the particular case when $a=0$, equality
\eqref{Miller_Constant} coincides with the result
of \cite[Sect.~5]{12}.
\end{remark}

The fractional nabla result analogous to Corollary~\ref{m:f:cor}
is easily obtained:

\begin{corollary}
If $k$ is a constant, then
\begin{equation*}
(\nabla_a^{-\beta}k)(t)
=\frac{\Gamma(t-a+1+\beta)}{\Gamma(\beta+1)\Gamma(t-a+1)}k.
\end{equation*}
\end{corollary}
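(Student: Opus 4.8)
The plan is to derive this corollary as a direct specialization of Theorem~\ref{thm:const}, exactly in the spirit of the proof of Corollary~\ref{m:f:cor}. First I would recall that setting $\gamma = 0$ in Definition~\ref{diamond} collapses the diamond operator to the nabla fractional sum, i.e.\ $\left({_{0}\diamondsuit}_a^{-\alpha,-\beta}k\right)(t) = \left(\nabla_a^{-\beta}k\right)(t)$, a fact already recorded in the remark following Definition~\ref{diamond}. Then I would apply Theorem~\ref{thm:const} with $\gamma = 0$: the $\gamma$-weighted delta term vanishes identically, and the $(1-\gamma)$-weighted nabla term has coefficient $1$, leaving precisely
\[
\left(\nabla_a^{-\beta}k\right)(t)
= \left({_{0}\diamondsuit}_a^{-\alpha,-\beta}k\right)(t)
= \frac{\Gamma(t-a+1+\beta)}{\Gamma(\beta+1)\,\Gamma(t-a+1)}\,k,
\]
which is the claimed identity.

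The key steps, in order, are: (i) invoke the $\gamma = 0$ reduction of the diamond operator to the nabla sum; (ii) substitute $\gamma = 0$ into the formula of Theorem~\ref{thm:const}; (iii) simplify the resulting expression, noting that the first summand drops out and the second carries a trivial factor of $1$. There is essentially no obstacle here—the work has already been done in Theorem~\ref{thm:const}, and this corollary merely reads off one endpoint of the convex combination, mirroring how Corollary~\ref{m:f:cor} reads off the other ($\gamma = 1$) endpoint. If one preferred a self-contained argument, an alternative would be to compute $\left(\nabla_a^{-\beta}k\right)(t) = \frac{k}{\Gamma(\beta)}\sum_{s=a}^{t}(t-\rho(s))^{\overline{\beta-1}}$ directly, using the telescoping/summation identity for rising factorials that already underlies the nabla half of Theorem~\ref{thm:const}'s proof; but the specialization route is shorter and keeps the paper's structure uniform.

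The only point demanding any care at all is bookkeeping: one must make sure the constant $k$ is indeed treated as the function $f(t) \equiv k$ on all of $\mathbb{N}_a$, so that Theorem~\ref{thm:const} applies verbatim, and that the index range in the underlying nabla sum is the full $\{a, a+1, \ldots, t\}$. Since Theorem~\ref{thm:const} was stated for exactly this situation, no hypotheses need to be re-checked. I would therefore present the proof in one or two lines, as the excerpt already does.
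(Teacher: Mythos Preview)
Your proposal is correct and matches the paper's own proof essentially verbatim: the paper also proves this corollary by choosing $\gamma = 0$ in Theorem~\ref{thm:const} and invoking the reduction $\left({_{0}\diamondsuit}_a^{-\alpha,-\beta}k\right)(t) = \left(\nabla_a^{-\beta}k\right)(t)$.
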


\begin{proof}
The result follows from Theorem~\ref{thm:const}
choosing $\gamma=0$ and recalling that
$(_{0}\diamondsuit_a^{-\alpha,-\beta}k)(t)
=(\nabla_a^{-\beta}k)(t)$.
\end{proof}

\begin{theorem}
\label{thm:10}
Let $f$ be a real valued function and
$\alpha_1$, $\alpha_2$, $\beta_1$, $\beta_2>0$. Then,
\begin{equation*}
\left(_{\gamma}\diamondsuit_a^{-\alpha_1,
-\beta_1}\left(_{\gamma}\diamondsuit_a^{-\alpha_2,-\beta_2}f\right)\right)(t)
=\gamma\left({_{\gamma}}\diamondsuit_a^{-(\alpha_1+\alpha_2),
-(\beta_1+\alpha_2)} f\right)(t)+(1-\gamma)\left(
{_{\gamma}}\diamondsuit_a^{-(\alpha_1+\beta_2),-(\beta_1+\beta_2)} f\right)(t).
\end{equation*}
\end{theorem}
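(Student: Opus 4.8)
The plan is first to put the diamond operator into a single uniform shape. By Lemma~\ref{nabladelta}, $(\Delta_a^{-\nu}y)(t+\nu)=(\nabla_a^{-\nu}y)(t)$, so the shift $t\mapsto t+\alpha$ built into the delta part of Definition~\ref{diamond} converts that part into a nabla sum; hence, for all $\mu,\nu>0$,
\[
\left({_{\gamma}}\diamondsuit_a^{-\mu,-\nu}f\right)(t)
=\gamma\left(\nabla_a^{-\mu}f\right)(t)+(1-\gamma)\left(\nabla_a^{-\nu}f\right)(t).
\]
With both sides of the claimed identity rewritten this way, the statement becomes an assertion about compositions of nabla fractional sums only, and the proof reduces to two applications of linearity together with the nabla semigroup law.

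Concretely, I would argue as follows. By the display above, the inner operator is $g:=\left({_{\gamma}}\diamondsuit_a^{-\alpha_2,-\beta_2}f\right)=\gamma\,\nabla_a^{-\alpha_2}f+(1-\gamma)\,\nabla_a^{-\beta_2}f$, which is again defined on $\mathbb{N}_a$, so the outer operator applies to it. Each nabla sum $\nabla_a^{-\mu}$ is linear — additivity is the case $\gamma=0$ of Theorem~\ref{thm:SUM} and homogeneity is immediate from Definition~\ref{nabla} — hence so is the diamond operator, and it distributes over the convex combination defining $g$: $\left({_{\gamma}}\diamondsuit_a^{-\alpha_1,-\beta_1}g\right)(t)=\gamma\left({_{\gamma}}\diamondsuit_a^{-\alpha_1,-\beta_1}(\nabla_a^{-\alpha_2}f)\right)(t)+(1-\gamma)\left({_{\gamma}}\diamondsuit_a^{-\alpha_1,-\beta_1}(\nabla_a^{-\beta_2}f)\right)(t)$. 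Now apply the nabla index (semigroup) law $\nabla_a^{-\mu}(\nabla_a^{-\nu}f)=\nabla_a^{-(\mu+\nu)}f$ inside each of the two nabla terms of the outer diamond: this gives $\left({_{\gamma}}\diamondsuit_a^{-\alpha_1,-\beta_1}(\nabla_a^{-\alpha_2}f)\right)=\gamma\nabla_a^{-(\alpha_1+\alpha_2)}f+(1-\gamma)\nabla_a^{-(\beta_1+\alpha_2)}f=\left({_{\gamma}}\diamondsuit_a^{-(\alpha_1+\alpha_2),-(\beta_1+\alpha_2)}f\right)$ and, likewise, $\left({_{\gamma}}\diamondsuit_a^{-\alpha_1,-\beta_1}(\nabla_a^{-\beta_2}f)\right)=\left({_{\gamma}}\diamondsuit_a^{-(\alpha_1+\beta_2),-(\beta_1+\beta_2)}f\right)$. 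Substituting these two identities into the previous line produces precisely the right-hand side of the theorem.

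The main obstacle is, honestly, not much of one once the reduction to nabla form is in place: what remains to watch is only that the two ``mixed'' orders $\beta_1+\alpha_2$ and $\alpha_1+\beta_2$ land in the right slots, which they do by construction (the scalar weights $\gamma$ and $1-\gamma$ commute, so no recombination is needed). The one ingredient not proved in the present excerpt is the nabla semigroup law; it is classical, and I would cite it from \cite{4}. A caveat worth a line: Lemma~\ref{nabladelta} is stated for $a$ a positive integer, so the reduction — and hence the argument in this form — is carried out under that hypothesis; alternatively, one can avoid Lemma~\ref{nabladelta} and instead invoke, on the delta part, the corresponding composition law for the delta fractional sum (which carries the shift along), at the price of heavier index bookkeeping.
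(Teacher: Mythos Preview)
Your argument is correct and takes a different route from the paper's. You invoke Lemma~\ref{nabladelta} once at the very start to rewrite the diamond operator purely in nabla form, $\left({_{\gamma}}\diamondsuit_a^{-\mu,-\nu}f\right)(t)=\gamma(\nabla_a^{-\mu}f)(t)+(1-\gamma)(\nabla_a^{-\nu}f)(t)$, after which only linearity and the single semigroup law $\nabla_a^{-\mu}\nabla_a^{-\nu}=\nabla_a^{-(\mu+\nu)}$ are needed. The paper instead keeps the mixed delta--nabla form throughout: it expands the composition into the four cross terms $\gamma^2\,\Delta\Delta$, $\gamma(1-\gamma)\,\Delta\nabla$, $(1-\gamma)\gamma\,\nabla\Delta$, $(1-\gamma)^2\,\nabla\nabla$, then applies Lemma~\ref{nabladelta} to each mixed term separately (turning $\Delta\nabla$ into $\Delta\Delta$ and $\nabla\Delta$ into $\nabla\nabla$), and finally uses \emph{both} the delta and the nabla semigroup laws before regrouping. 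Your version is tidier and needs one composition law rather than two; the paper's version keeps the delta structure visible and tracks the evaluation-point shifts $t\mapsto t+\alpha_i$ explicitly. Your caveat about the positive-integer hypothesis on $a$ in Lemma~\ref{nabladelta} applies equally to the paper's proof, since that lemma is invoked there as well.
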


\begin{proof}
Direct calculations show the intended relation:
\begin{equation*}
\begin{split}
(_{\gamma}&\diamondsuit_a^{-\alpha_1,-\beta_1}(_{\gamma}\diamondsuit_a^{-\alpha_2,-\beta_2}f))(t)
=\gamma(\Delta_a^{-\alpha_1}(_{\gamma}\diamondsuit_a^{-\alpha_2,-\beta_2}f))(t+\alpha_1)
+ (1-\gamma)(\nabla_a^{-\beta_1}(_{\gamma}\diamondsuit_a^{-\alpha_2,-\beta_2}f))(t)\\
&=\gamma^2 (\Delta_a^{-\alpha_1}\left(\Delta_a^{-\alpha_2}f\right))(t+\alpha_1+\alpha_2)
+ \gamma(1-\gamma)\left(\Delta_a^{-\alpha_1}\left(\nabla_a^{-\beta_2}f\right)\right)(t+\alpha_1)
+ (1-\gamma)\gamma\left(\nabla_a^{-\beta_1}\left(\Delta_a^{-\alpha_2}f\right)\right)(t+\alpha_2)\\
&\quad +  (1-\gamma)^2\left(\nabla_a^{-\beta_1}\left(\nabla_a^{-\beta_2}f\right)\right)(t)\\
&=\gamma^2\left(\Delta_a^{-(\alpha_1+\alpha_2)}f\right)(t+\alpha_1+\alpha_2)
+ \gamma(1-\gamma)\left(\Delta_a^{-\alpha_1}\left(\Delta_a^{-\beta_2}f\right)\right)(t+\alpha_1+\beta_2)\\
&\quad + (1-\gamma)\gamma\left(\nabla_a^{-\beta_1}\left(\nabla_a^{-\alpha_2}f\right)\right)(t)
+(1-\gamma)^2 \left(\nabla_a^{-(\beta_1+\beta_2)}f\right)(t)\\
&=\gamma^2\left(\Delta_a^{-(\alpha_1+\alpha_2)}f\right)(t+\alpha_1+\alpha_2)
+ \gamma(1-\gamma)\left(\Delta_a^{-(\alpha_1+\beta_2)}f\right)(t+\alpha_1+\beta_2)\\
&\quad +  (1-\gamma)\gamma\left(\nabla_a^{-(\beta_1+\alpha_2)}f\right)(t)
+(1-\gamma)^2\left(\nabla_a^{-(\beta_1+\beta_2)}f\right)(t)\\
&=\gamma\left[\gamma\left(\Delta_a^{-(\alpha_1+\alpha_2)}f\right)(t+\alpha_1+\alpha_2)
+ (1-\gamma)\left(\nabla_a^{-(\beta_1+\alpha_2)}f\right)(t)\right]\\
&\quad +  (1-\gamma)\left[\gamma\left(\Delta_a^{-(\alpha_1+\beta_2)}f\right)(t+\alpha_1+\beta_2)
+ (1-\gamma)\left(\nabla_a^{-(\beta_1+\beta_2)}f\right)(t)\right].
\end{split}
\end{equation*}
\end{proof}

\begin{remark}
If $\gamma=0$, then
$\left(_{\gamma}\diamondsuit_a^{-\alpha_1,-\beta_1}(_{\gamma}\diamondsuit_a^{-\alpha_2,-\beta_2}f)\right)(t)
=\left(\nabla_a^{-(\beta_1+\beta_2)}f\right)(t)$.
\end{remark}

\begin{remark}
If $\gamma=1$, then
$\left(_{\gamma}\diamondsuit_a^{-\alpha_1,-\beta_1}(_{\gamma}\diamondsuit_a^{-\alpha_2,-\beta_2}f)\right)(t)
=\left(\Delta_a^{-(\alpha_1+\alpha_2)}f\right)(t+\alpha_1+\alpha_2)$.
\end{remark}

\begin{remark}
If $\alpha_1=\alpha_2=\alpha$ and $\beta_1=\beta_2=\beta$, then
$\left(_{\gamma}\diamondsuit_a^{-\alpha,-\beta}(_{\gamma}\diamondsuit_a^{-\alpha,-\beta}f)\right)(t)
= \left({_{\gamma}}\diamondsuit_a^{-\alpha,-\beta}f\right)(t)$.
\end{remark}

We now prove a general Leibniz formula.

\begin{theorem}[Leibniz formula]
\label{thm:ProductRule}
Let $f$ and $g$ be real valued functions, $0<\alpha,~\beta<1$.
For all $t$ such that $t=a \mod (1)$, the following equality holds:
\begin{multline}
\label{eq:GLF}
\left(_{\gamma}\diamondsuit_a^{-\alpha,-\beta}(fg)\right)(t)
=\gamma\sum_{k=0}^\infty\binom{-\alpha}{k}\left[\left(\nabla^k
g\right)(t)\right] \cdot\left[\left(\Delta_a^{-(\alpha+k)}f\right)(t+\alpha+k)\right]\\
+(1-\gamma)\sum_{k=0}^\infty\binom{-\beta}{k}\left[\left(\nabla^k
g\right)(t)\right]\left[\left(\Delta_a^{-(\beta+k)}f\right)(t+\beta + k)\right],
\end{multline}
where
$$
\binom{u}{v}=\frac{\Gamma(u+1)}{\Gamma(v+1)\Gamma(u-v+1)}.
$$
\end{theorem}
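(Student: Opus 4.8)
The plan is to strip the diamond operator down to its two constituents and reduce the whole statement to a single Leibniz identity for one fractional sum. By Definition~\ref{diamond},
\[
\left(_{\gamma}\diamondsuit_a^{-\alpha,-\beta}(fg)\right)(t)
=\gamma\left(\Delta_a^{-\alpha}(fg)\right)(t+\alpha)
+(1-\gamma)\left(\nabla_a^{-\beta}(fg)\right)(t),
\]
and Lemma~\ref{nabladelta} (applicable here since $0<\alpha,\beta<1$) lets me rewrite $\left(\nabla_a^{-\beta}(fg)\right)(t)=\left(\Delta_a^{-\beta}(fg)\right)(t+\beta)$, and likewise $\left(\nabla_a^{-(\mu+k)}f\right)(t)=\left(\Delta_a^{-(\mu+k)}f\right)(t+\mu+k)$ for every $\mu\in(0,1)$ and integer $k\ge 0$. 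Hence it suffices to prove, for each fixed $\mu\in(0,1)$, the one-operator Leibniz rule
\[
\left(\Delta_a^{-\mu}(fg)\right)(t+\mu)
=\sum_{k=0}^{\infty}\binom{-\mu}{k}\left(\nabla^{k} g\right)(t)\left(\Delta_a^{-(\mu+k)}f\right)(t+\mu+k),
\]
since applying it with $\mu=\alpha$, multiplying by $\gamma$, then applying it with $\mu=\beta$, multiplying by $1-\gamma$, and adding reproduces \eqref{eq:GLF} exactly. (For $f,g$ defined on $\mathbb{N}_a$ both sides are in fact finite sums, since $\left(\nabla^{k}g\right)(t)$ only involves $g$ at $t,t-1,\dots,t-k$, so all terms with $k>t-a$ drop out.)

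To establish the one-operator rule I would follow the classical proof of the fractional Leibniz formula. First expand $g$ about the point $t$: for $s\le t$ one has the finite identity $g(s)=\sum_{k\ge0}(-1)^{k}\binom{t-s}{k}\left(\nabla^{k}g\right)(t)$, the discrete analogue of Taylor's formula obtained by iterating $(\nabla g)(t)=g(t)-g(t-1)$; this is a genuine finite sum because $t-s$ is a nonnegative integer. Substituting it into
\[
\left(\Delta_a^{-\mu}(fg)\right)(t+\mu)
=\frac{1}{\Gamma(\mu)}\sum_{s=a}^{t}\bigl(t+\mu-\sigma(s)\bigr)^{(\mu-1)}f(s)g(s)
\]
and interchanging the two (finite) summations yields
\[
\left(\Delta_a^{-\mu}(fg)\right)(t+\mu)
=\sum_{k\ge0}(-1)^{k}\left(\nabla^{k}g\right)(t)\cdot\frac{1}{\Gamma(\mu)}\sum_{s=a}^{t}\binom{t-s}{k}\bigl(t+\mu-\sigma(s)\bigr)^{(\mu-1)}f(s).
\]
It then remains to identify the inner $s$-sum with $\binom{-\mu}{k}\left(\Delta_a^{-(\mu+k)}f\right)(t+\mu+k)$; using that $\binom{-\mu}{k}=(-1)^{k}\Gamma(\mu+k)/(\Gamma(\mu)\Gamma(k+1))$ together with the factorial-function identity $t^{(\mu-1)}=\Gamma(t+1)/\Gamma(t+2-\mu)$ from Section~\ref{sec:2}, this comes down to a term-by-term Gamma-function computation comparing $\binom{t-s}{k}\bigl(t+\mu-\sigma(s)\bigr)^{(\mu-1)}$ with $\bigl(t+\mu+k-\sigma(s)\bigr)^{(\mu+k-1)}$. (Alternatively one may simply invoke the delta and nabla Leibniz formulas already available in the discrete fractional calculus literature, after which only the bookkeeping above remains.)

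The step I expect to be the main obstacle is precisely this last identification, together with the justification of swapping the summations. In the continuous case the analogous step is immediate because $(t-\tau)^{\mu-1}(\tau-t)^{k}=(-1)^{k}(t-\tau)^{\mu+k-1}$; in the discrete setting the falling and rising factorial functions do not add exponents, so one must push the argument shift from $t+\mu$ to $t+\mu+k$ through the $\Gamma$-quotients explicitly and keep track of where $\binom{t-s}{k}$ vanishes (which is what turns the $s$-sum into one up to $t-k$). It is also here that $0<\mu<1$ is genuinely needed, so that every binomial coefficient and factorial function in sight is unambiguously defined and Lemma~\ref{nabladelta} applies to each order $\mu+k$. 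Once the one-operator rule is in hand, \eqref{eq:GLF} follows from the convex-combination structure of $_{\gamma}\diamondsuit_a^{-\alpha,-\beta}$ as in the first paragraph.
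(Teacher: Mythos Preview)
Your proposal is correct and follows the same core argument as the paper: expand $g(s)$ in a discrete Taylor series about $t$, interchange the finite sums, and identify the resulting inner sum as a shifted fractional delta sum via Gamma-function manipulations. The only organizational difference is that you first invoke Lemma~\ref{nabladelta} to reduce everything to a single one-operator identity for $\Delta_a^{-\mu}$ and then apply it twice, whereas the paper keeps the $\Delta$- and $\nabla$-parts side by side throughout and converts the nabla kernel to delta form via the pointwise identity $(t-\rho(s))^{\overline{\beta-1}}(t-s)^{(k)}=(t+\beta-\sigma(s))^{(\beta+k-1)}$ rather than via Lemma~\ref{nabladelta}. Your reduction-first packaging is a mild streamlining (one computation instead of two parallel ones) and makes transparent that the diamond result is purely a consequence of the convex-combination structure plus the known delta Leibniz rule; the paper's version has the advantage of being self-contained and not relying on Lemma~\ref{nabladelta} for the higher orders $\mu+k$. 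The step you flag as the main obstacle---matching $\binom{t-s}{k}(t+\mu-\sigma(s))^{(\mu-1)}$ with the kernel of $\Delta_a^{-(\mu+k)}$ at $t+\mu+k$ and absorbing the sign into $\binom{-\mu}{k}$---is exactly where the paper's computation concentrates as well.
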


\begin{proof}
By definition of the diamond fractional sum,
\begin{equation*}
\begin{split}
\left(_{\gamma}\diamondsuit_a^{-\alpha,-\beta}(fg)\right)(t)
&=\gamma \left(\Delta_a^{-\alpha}(fg)\right)(t+\alpha)
+(1-\gamma)\left(\nabla_a^{-\beta}(fg)\right)(t)\\
&=\frac{\gamma}{\Gamma(\alpha)}\sum_{s=a}^{t}(t+\alpha-\sigma(s))^{(\alpha-1)}f(s)g(s)
+\frac{1-\gamma}{\Gamma(\beta)}\sum_{s=a}^{t}(t-\rho(s))^{\overline{\beta-1}}f(s)g(s).
\end{split}
\end{equation*}
By Taylor's expansion of $g(s)$ \cite{1},
$$
g(s)=\sum_{k=0}^\infty \frac{(s-t)^{\overline{k}}}{k!}
(\nabla^k g)(t)=\sum_{k=0}^\infty (-1)^k\frac{(t-s)^{(k)}}{k!}(\nabla^k g)(t).
$$
Substituting the Taylor series of $g(s)$ at $t$,
\begin{multline*}
\left(_{\gamma}\diamondsuit_a^{-\alpha,-\beta}(fg)\right)(t)
=\frac{\gamma}{\Gamma(\alpha)}\sum_{s=a}^{t}(t+\alpha-\sigma(s))^{(\alpha-1)}f(s)
\left[\sum_{k=0}^\infty (-1)^k(t-s)^{(k)}\frac{(\nabla^k g)(t)}{k!}\right]\\
+\frac{1-\gamma}{\Gamma(\beta)}\sum_{s=a}^{t}(t-\rho(s))^{\overline{\beta-1}}f(s)
\left[\sum_{k=0}^\infty
(-1)^k(t-s)^{(k)}\frac{(\nabla^k g)(t)}{k!}\right].
\end{multline*}
Since
\begin{equation*}
\begin{split}
(t+\alpha-\sigma(s))^{(\alpha-1)}(t-s)^{(k)}&=(t+\alpha-\sigma(s))^{(\alpha+k+1)},\\
(t-\rho(s))^{\overline{\beta-1}}(t-s)^{(k)}&=(t+\beta-\sigma(s))^{(\beta+k+1)},
\end{split}
\end{equation*}
and $\displaystyle\sum_{s=t-k+1}^{t}(t-s)^{(k)}=0$, we have
\begin{multline*}
\left(_{\gamma}\diamondsuit_a^{-\alpha,-\beta}(fg)\right)(t)
=\frac{\gamma}{\Gamma(\alpha)}\sum_{k=0}^\infty
(-1)^k\frac{(\nabla^k g)(t)}{k!} \sum_{s=a}^{t-k}(t+\alpha-\sigma(s))^{(\alpha+k-1)}f(s)\\
+\frac{1-\gamma}{\Gamma(\beta)}\sum_{k=0}^\infty
(-1)^k\frac{(\nabla^k g)(t)}{k!}
\sum_{s=a}^{t-k}(t+\beta-\sigma(s))^{(\beta+k-1)}f(s).
\end{multline*}
Because
$$
(-1)^k=\frac{\Gamma(-\alpha+1)\Gamma(\alpha)}{\Gamma(-\alpha+k+1)\Gamma(k+\alpha)}
=\frac{\Gamma(-\beta+1)\Gamma(\beta)}{\Gamma(-\beta+k+1)\Gamma(k+\beta)}
$$
and $k!=\Gamma(k+1)$, the above expression becomes
\begin{equation*}
\begin{split}
\left(_{\gamma}\diamondsuit_a^{-\alpha,-\beta}(fg)\right)(t)
&=\frac{\gamma}{\Gamma(\alpha)} \sum_{k=0}^\infty (\nabla^k g)(t)
\binom{-\alpha}{k}\cdot\left[\frac{1}{\Gamma(k+\alpha)}
\sum_{s=a}^{t-k}(t+\alpha-\sigma(s))^{(\alpha+k-1)}f(s)\right]\\
&\qquad+\frac{1-\gamma}{\Gamma(\beta)} \sum_{k=0}^\infty (\nabla^k g)(t)
\binom{-\beta}{k}\left[\frac{1}{\Gamma(k+\beta)}
\sum_{s=a}^{t-k}(t+\beta-\sigma(s))^{(\beta+k-1)}f(s)\right]\\
&=\gamma\sum_{k=0}^\infty \binom{-\alpha}{k}(\nabla^k g)(t)
(\Delta_a^{-(\alpha+k)}f)(t+\alpha+k)\\
&\qquad+(1-\gamma)\sum_{k=0}^\infty
\binom{-\beta}{k}(\nabla^k g)(t)(\Delta_a^{-(\beta+k)}f)(t+\beta+k).
\end{split}
\end{equation*}
\end{proof}

\begin{remark}
Choosing $\gamma=0$ in our Leibniz formula \eqref{eq:GLF}, we obtain that
$$(\nabla_a^{-\beta}(fg))(t)
=\sum_{k=0}^\infty\binom{-\beta}{k}\left[(\nabla^k
g)(t)\right]\left[(\Delta_a^{-(\beta+k)}f)(t+\beta+k)\right].
$$
\end{remark}

\begin{remark}
Choosing $\gamma=1$ in our Leibniz formula \eqref{eq:GLF}, we obtain that
\begin{equation}
\label{LeibnizDelta}
(\Delta_a^{-\alpha}(fg))(t+\alpha)
=\sum_{k=0}^\infty\binom{-\alpha}{k}\left[(\nabla^k g)(t)\right]
\left[(\Delta_a^{-(\alpha+k)}f)(t+\alpha+k)\right].
\end{equation}
As a particular case of \eqref{LeibnizDelta}, let $a=0$.
Then, recalling Lemma~\ref{nabladelta},
we obtain the Leibniz formulas of \cite{5}.
\end{remark}


\section{Conclusion}
\label{sec:4}

The discrete fractional calculus is a subject under
strong current research
(see, \textrm{e.g.}, \cite{6,7,10,Goodrich1,Goodrich2} and references therein).
Two versions of the discrete fractional calculus, the delta and the nabla,
are now standard in the fractional theory.
Motivated by the diamond-alpha dynamic derivative on time scales \cite{11,13,14}
and the fractional derivative of \cite{withBasia:Spain2010},
we introduce here a combined diamond-gamma fractional sum
of order (alpha, beta), as a linear combination
of the delta and nabla fractional sum operators
of order alpha and beta, respectively.
The new operator interpolates between the delta and nabla cases,
reducing to the standard fractional delta operator when $\gamma =1$
and to the fractional nabla sum when $\gamma =0$.

Using the discrete fractional diamond sum here proposed,
one can now introduce the discrete fractional diamond difference
in the usual way. It is our intention to generalize the new
discrete diamond fractional operator
to an arbitrary time scale $\mathbb{T}$
(\textrm{i.e.}, to an arbitrary nonempty
closed set of the real numbers). Another line of research,
to be addressed elsewhere, consists to investigate
the usefulness of modeling with fractional diamond equations
and study corresponding fractional variational principles.


\newpage

\leftline{\bf\ Acknowledgments}

\vskip 10 pt

This work is part of the first author's Ph.D. project, carried out at
the University of Aveiro under the framework of the Doctoral
Programme \emph{Mathematics and Applications} of Universities of
Aveiro and Minho, and was partially presented during the
\emph{3rd Conference on Nonlinear Science and Complexity} (NSC10),
Cankaya University, Ankara, 26-29 July, 2010.
The financial support of the Polytechnic Institute
of Viseu and \emph{The Portuguese Foundation for Science and
Technology} (FCT), through the ``Programa de apoio \`{a}
forma\c{c}\~{a}o avan\c{c}ada de docentes do Ensino Superior
Polit\'{e}cnico'', Ph.D. fellowship SFRH/PROTEC/49730/2009, is here
gratefully acknowledged. The authors were also supported by FCT through the
\emph{Center for Research and Development in Mathematics and Applications} (CIDMA).

\vskip 20 pt



\end{document}